 \patchcmd\Gread@eps{\@inputcheck#1}{\@inputcheck"#1"\relax}{}{}
\newtheorem{theorem}{Theorem} 
\newtheorem{lemma}{Lemma} 
\newtheorem{proposition}{Proposition}  
\newtheorem{definition}{Definition}
\newtheorem{remark}{Remark}
\numberwithin{equation}{section}
\title[Higher regularity estimates for the porous medium equations]{Higher regularity estimates for the porous medium equation near the Heat equation}
\author[D.J. Ara\'ujo]{Dami\~ao J. Ara\'ujo}
\address{Department of Mathematics \\ Universidade Federal da Para\'iba \\ 58059-900 \\ Jo\~ao Pessoa-PB, Brazil}{}
\email{araujo@mat.ufpb.br}
\begin{document}

\subjclass[2010]{Primary 35B65. Secondary 35K65, 76S05}

\keywords{porous medium equation, sharp local regularity, intrinsic scaling}

\begin{abstract} 
In this paper we investigate regularity aspects for solutions of the nonlinear parabolic equation 
\begin{equation}\nonumber 
u_t= \Delta u^m, \quad m > 1 
\end{equation}  
usually called the porous medium equation. More precisely, we provide sharp regularity estimates for bounded nonnegative weak solutions along the free boundary $\partial\{u>0\}$, when the equation is universally close to the heat equation.  As a consequence, local Lipschitz estimates are also established for this scenario.
\end{abstract}   

\date{\today}

\maketitle

%%%%%%%%%%%%%%%%%%%%%%%%%%%%%%%%%%%%%%%%%%%%%%%%%%%
%  Body of the article
%  Please insert here the TeX source of your paper
%  (except for the bibliography)
%%%%%%%%%%%%%%%%%%%%%%%%%%%%%%%%%%%%%%%%%%%%%%%%%%%

%%%%%%%%%%%%%%%%%%%%%%%%%%%%%%%%%%%%%%%%%%
\section{Introduction} \label{sct intro}
%%%%%%%%%%%%%%%%%%%%%%%%%%%%%%%%%%%%%%%%%%

The aim of this article is to study fine regularity properties for solutions of the \textit{porous medium equation}-(pme) 
\begin{equation}\label{m-eq}\tag{$m$-pme} 
u_t= \Delta u^m, \quad m > 1. 
\end{equation} 

The mathematical analysis involving \textit{pme} has attracted attention for the last six decades, motivated by its relation with natural phenomena models which describe processes involving fluid flow, heat transfer and, in general terms, nonlinear diffusion process, cf. \cite{vasquez}.
 
Unlike the uniform parabolicity exhibited for the classical linear \textit{heat equation} $u_t=\Delta u$, case $m=1$, for parameters $m > 1$ the finite speed of propagation property holds for \eqref{m-eq} and so, solutions may present parabolic degeneracy along the set 
$$
\mathcal{F}(u)=\partial\{(t,x) \; : u(t,x) \neq 0\},
$$
which imposes lack of smoothness for solutions. In general, the best regularity result known guarantees local $C^{0,\alpha}$ regularity for solutions in time and space for some universal $0<\alpha \ll 1$, see \cite{D,DF1}. However, no more information is known for the exponent $\alpha$. In this connection, specifically for dimension $d=1$, it was established that the pressure term $\varrho \approx u^{m-1}$ is locally Lipschitz continuous in space, see \cite{A1,AC1}. From this fact, solutions are $C^{\,0,\beta}$ for the sharp exponent  
$$ 
\beta=\min\{1,1/(m-1)\}.
$$
Nevertheless, it does not occur in higher dimensions due to a counter-example provided in \cite{AG}. On the other hand, under natural extra conditions, solutions may present surprising gains of regularity. For instance, it was shown in \cite{KKV} that after a time interval locally flat solutions of \eqref{m-eq} and their respective pressure terms are $C^\infty$. This exemplify how the question of obtaining sharp regularity estimates for solutions of \eqref{m-eq} has been an interesting and delicate subject. We also mention \cite{AMU} for sharp regularity estimates, obtained in terms of the integrability of the source term and \cite{CVW} for Lipschitz regularity estimates for large times. 

In the 1950s,  a fundamental solution for  \eqref{m-eq} was found by Barenblatt \cite{barenblatt}, Zel'dovich and Kompaneets \cite{ZK} and later by Pattle \cite{pattle}. They obtained the following explicit formula 
$$
\mathcal{B}_m(t,|x|,C) =t^{-\alpha} \left(C - \frac{b(m-1)}{2m}\frac{|x|^2}{t^{2b}}\right)_+^{\frac{1}{m-1}}  
$$
for a free parameter $C>0$, $\alpha=\frac{d}{d(m-1)+2}$ and $b=\frac{\alpha}{d}$, where $d$ denotes the dimension of the euclidean space.  This fundamental solution, also called \textit{Barenblatt solution}, presents a Dirac mass as the initial data due to $\mathcal{B}_m(t,|x|,C) \to M \delta_0(x)$ as $t \to 0$, where $M(C,m,d)=\int\mathcal{B}_m \,dx$ is the total mass. Based on the structure of $\mathcal{B}_m$, we observe that at the inner edge 
$$ 
\mathcal{F}(\mathcal{B}_m):=\partial\left\{(t,x) \, : \,\mathcal{B}_m(t,|x|,C)>0\right\}=\left\{(t,x) \, : \, |x|=\sqrt{\frac{2Cm}{b(m-1)}}\, \cdot t^{b} \right\}, 
$$
the gradient blows up for $m > 2$, is finite when $m = 2$, and vanishes (but with a nonzero derivative) in the case $1<m<2$. More precisely, we note that 
$$
\lim\limits_{m\to 1}\mathcal{B}_m(t,|x|,C) = M\mathcal{E}(t,x)
$$ 
where $\mathcal{E}(t,x) \in C^\infty$ is the fundamental solution of the heat equation. Therefore, even under a rough initial data, we observe that for any $t>0$, the family $\{\mathcal{B}_m\}_{m>1}$ plays the following role: the smoothness of $\mathcal{B}_m$ at $\mathcal{F}(\mathcal{B}_m)$ increases asymptotically to, let us say, $C^\infty$ as $m\searrow 1$.

\begin{figure}[h!]
\centering
% \usepackage[usenames,dvipsnames]{pstricks}
% \usepackage{epsfig}
% \usepackage{pst-grad} % For gradients
% \usepackage{pst-plot} % For axes
% \usepackage[space]{grffile} % For spaces in paths
% \usepackage{etoolbox} % For spaces in paths
% \makeatletter % For spaces in paths
% \patchcmd\Gread@eps{\@inputcheck#1 }{\@inputcheck"#1"\relax}{}{}
% \makeatother 
% 
\psscalebox{0.75 0.75} % Change this value to rescale the drawing.
{
\begin{pspicture}(0,-4.577778)(15.244445,4.577778)
\definecolor{colour0}{rgb}{0.77254903,0.75686276,0.75686276}
\definecolor{colour1}{rgb}{0.56078434,0.5529412,0.5529412}
\definecolor{colour2}{rgb}{0.56078434,0.53333336,0.53333336}
\psline[linecolor=black, linewidth=0.04, arrowsize=0.05291667cm 2.0,arrowlength=1.4,arrowinset=0.0]{->}(0.12222222,-2.5555556)(15.1,-2.5666666)
\rput[bl](0.4888889,3.9355555){$t>0$}
\rput[bl](14.666667,-2.9444444){$x$}
\rput[bl](7.411111,-0.5888889){$\mathcal{E}$}
\psbezier[linecolor=colour0, linewidth=0.04](5.111111,-2.5666666)(5.5111113,0.23333333)(6.711111,3.8333333)(7.5111113,3.833333333333332)(8.311111,3.8333333)(9.511111,0.23333333)(9.911111,-2.5666666)
\rput[bl](7.311111,3.1222222){$\mathcal{B}_2$}
\psbezier[linecolor=colour1, linewidth=0.04](3.488889,-2.5555556)(5.9666667,-0.54888886)(6.611696,2.2777777)(7.4888887,2.311111111111107)(8.366082,2.3444445)(9.733334,-1.48)(11.488889,-2.5555556)
\rput[bl](7.233333,1.5){$\mathcal{B}_m$}
\psbezier[linecolor=black, linewidth=0.04](0.23333333,-2.3333333)(5.311111,-2.211111)(6.532406,-0.06666667)(7.577778,-0.06666666666666572)(8.62315,-0.06666667)(10.244445,-2.4333334)(14.866667,-2.3333333)
\psdots[linecolor=black, dotsize=0.1](9.911111,-2.5666666)
\psdots[linecolor=black, dotsize=0.1](11.488889,-2.5777779)
\psdots[linecolor=black, dotsize=0.1](5.1,-2.5666666)
\psdots[linecolor=black, dotsize=0.1](3.5,-2.5555556)
\psframe[linecolor=black, linewidth=0.02, dimen=outer](15.244445,4.577778)(0.0,-4.577778) 
\rput[bl](0.5,-4.308889){$m\searrow 1$}
\rput[bl](9.444445,-3.2222223){$\mathcal{F}(\mathcal{B}_2)$}
\rput[bl](10.966666,-3.2222223){$\mathcal{F}(\mathcal{B}_m)$} 
\rput[bl](10.433333,-4.3){$\{\mathcal{B}_m>0\} \to  \{\mathcal{E}>0\}= \mathbb{R}^d$}
\psline[linecolor=colour2, linewidth=0.02, tbarsize=0.07055555cm 5.0]{|*-|*}(3.5,-3.0333333)(7.577778,-3.0333333)
\rput[bl](4.6,-3.8755555){$\scriptsize O(\frac{1}{\sqrt{m-1}})$} 
\end{pspicture}
} 
\caption{This picture represents the improvement of regularity for the Barenblatt solution as $m \searrow 1$: around the free boundary $\mathcal{F}(\mathcal{B}_m)$, it describes a surface leading towards a smooth surface.} 
\end{figure}
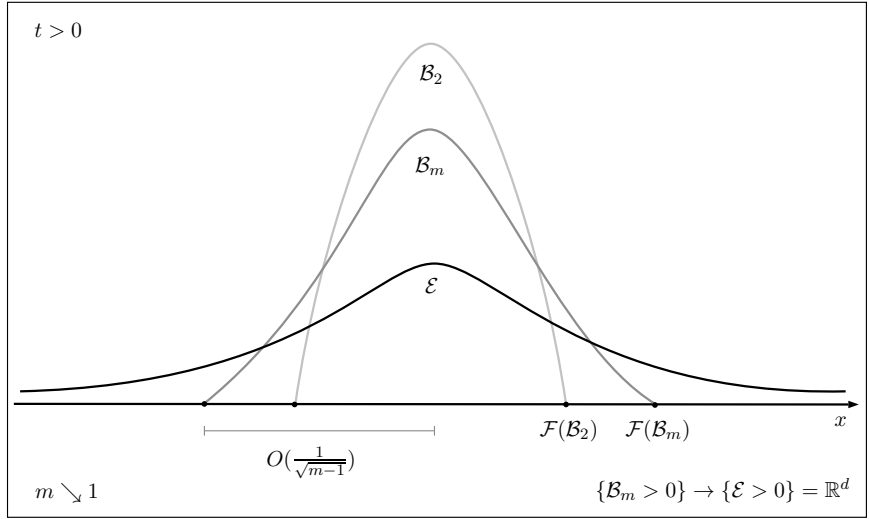

Motivated by such analysis, we turn our attention to investigate high regularity, in space and time, along interior points of the free boundary $\mathcal{F}(u):=\partial\{u>0\}$, for nonnegative bounded weak solutions of \eqref{m-eq} as $m$ is universally close to $1$. More precisely, fixed any parameter $\mu \in (0,+\infty)$ we provide an universal closeness regime such that solutions of \eqref{m-eq} are pointwisely of the class $C^{\mu}$ at $\mathcal{F}(u)$. 

Although such features seen to be appropriate for this scenario, it presents further difficulties. For example, we observe that \eqref{m-eq} revels a variance on the diffusion velocity, which depends directly on $m$. Thus to gather high regularity from the limit case $m=1$, a suitable strategy is needed since the intrinsic cylinders are changing when $m$ varies, see \eqref{cyl}. Motivated by this, we could try instead of solution $u$, to provide growth estimates through the pressure
\begin{equation}\nonumber
\varrho= c\, u^{m-1}, \quad \quad c > 0,
\end{equation}   
which turns our analysis to the pressure equation
$$
\varrho_t= {\tfrac{m}{c}} \varrho \Delta \varrho + {\tfrac{m}{c(m-1)}} |\nabla \varrho|^2. 
$$ 
Unlike equation \eqref{m-eq} the diffusion term related to the equation above does not depend on $m$ and so, an analysis of such pde would sound reasonable. However, since $m\searrow 1$ the pressure constant has to behave like $c \approx 1/(m-1)$ and so, no further information is available from the second order term of the limit equation.

As an important consequence of such analysis, we also study nonnegative bounded weak solutions for the inhomogeneous porous medium equation with bounded source term,
\begin{equation}\label{mf-eq}\tag{$f$,$m$-pme}
u_t-\Delta u^m = f \in L^\infty,
\end{equation}
providing for $m-1 \ll 1$, optimal growth estimates
$$
u(t,x) \sim |x-x_0|^{\frac{2}{m}} + |t-t_0|
$$
at touching ground points $(t_0,x_0) \in \mathcal{F}(u)$. Related to this scenario, we mention \cite{PS} for improved H\"older regularity estimates at the free boundary, under integrability conditions on the source term.

Futhermore, we also refine the methods employed here to provide sharp local $C^{0,1}$ regularity in space and  $C^{0,\frac 12}$ regularity in time for equations of the type \eqref{mf-eq}. We have postponed the precise statements to Section \ref{main}. 

The main strategy of this paper is based on the Caffarelli's compactness approach \cite{C89}, and a refined improvement of flatness strategy provided by Teixeira in \cite{T2}. For this scenario, we point out that, to produce a suitable flatness property, the compactness argument provides how $m$ has to be universally close to $1$. However, such closeness shall depend on the parabolic metric from which (unlike their elliptic counterpart) varies on $m$, causing a self-dependence on this parameter. In order to avoid this issue, a subtle use of dyadic parabolic cylinders is provided, see Proposition \ref{stepone}.

The paper is organized as follows. In Section \ref{pre} we gather relevant notations and present the main Theorems, as well as known results we shall use in this article. In Section \ref{grow} we treat the homogeneous case and the proof of Theorem \ref{fbreg} is delivered. The inhomogeneous case is discussed in Section \ref{inhomo}, where the proofs of Theorem \ref{fbregnon} and Theorem \ref{locreg} are carried out.

%%%%%%%%%%%%%%%%%%%%%%%%%%%%%%%%%%%%%%%
\section{Preliminaries and Main results} \label{pre}
%%%%%%%%%%%%%%%%%%%%%%%%%%%%%%%%%%%%%%%

%%%%%%%%%%%%%%%%%%%%%%%%%%%%%%%%%%%%%%%
\subsection*{Notations and intrinsic parabolic cylinders}
The lack of homogeneity caused by the nature of degeneracy or singularity of certain parabolic equations requires a refined choice for suitable cylinders, cf. \cite{TU,U,vasquez}. In view of this, for a fixed  open bounded set $U \subset \mathbb{R}^d$ and parameter $\theta>0$, we introduce the intrinsic $\theta$-parabolic cylinder  
\begin{equation}\label{cyl}
G_\rho^\theta :=I_\rho^\theta \times B_\rho \subset \mathbb{R} \times U,
\end{equation}
where $I_\rho^{\,\theta}:=(-\rho^\theta,0 \,]$ is an 1-dimensional interval and $B_\rho$ is the $d$-dimensional ball with radius $\rho$ centered at the origin. More generally, we set $I^{\,\theta}_\rho(t_0):=(-\rho^\theta+t_0,t_0\,]$ and $B_\rho(x_0):=\{x_0\}+B_\rho$.  We also denote $G_1:=G_1^\theta$ for any $\theta$.  
It is easy to check for any number $0<\mu<\infty$, equations of the type \eqref{m-eq} have an invariant scaling under cylinders \eqref{cyl} with the interpolation space/time given precisely by
\begin{equation}\label{interp}
\theta=\theta(\mu,m):=\mu(1-m)+2.
\end{equation}
We highlight the following monotonicity 
$$
G_\rho^{\theta} \subset G_\rho^{\theta'} \quad \mbox{for any } 0<\theta' \leq \theta \mbox{ and } 0<\rho<1.    
$$ 
Next, we introduce the notion of solutions of \eqref{mf-eq} we shall work with. 

\begin{definition}\label{def}
 We say a nonnegative locally bounded function
 $$
 u \in C_{loc}(0,T;L_{loc}^2(U)), \quad u^{\frac{m+1}{2}} \in L_{loc}^2(0,T,W_{loc}^{1,2}(U))
 $$
 is a local weak solution of \eqref{m-eq} if for every compact set $K \subset U$ and every sub interval $[t_1,t_2] \subset (0,T]$, there holds
 $$
 \left.\int_K u\varphi \right\rvert^{t_2}_{t_1}+\int^{t_2}_{t_1}\int_K-u\varphi_t+mu^{m-1}\nabla u\nabla\varphi \, = \int_{t_1}^{t_2}\int_Kf\varphi,
 $$
 for all nonnegative test functions 
 $$
 \varphi \in W^{1,2}_{loc}(0,T;L^2(K)) \cap L^2_{loc}(0,T;W^{1,2}_0(K)).
 $$
\end{definition}

Given a nonnegative function $u$ we denote the parabolic positive set of $u$ and the parabolic free boundary, respectively by
$$
\mathcal{P}(u):=\{(t,x) \in G_1 \, : \, u(t,x)>0\} \quad and \quad \mathcal{F}(u):=\partial \mathcal{P}(u) \cap G_1.
$$  
According to the above definition, we interpret the gradient term as
$$
u^{m-1}\nabla u := \frac{2}{m+1}u^{\frac{m-1}{2}}\nabla u^{\frac{m+1}{2}} \quad \mbox{in} \quad \mathcal{P}(u).  
$$ 

\medskip 

%%%%%%%%%%%%%%%%%%%%%%%%%%%%%%%%%%%%%
\subsection{Main results}\label{main}

Here we present the main results we shall prove in this paper. The first one provides high regularity estimates for the homogeneous equation \eqref{m-eq} at free boundary points $(t,x) \in \mathcal{F}(u)$.  

\begin{theorem}[High growth estimates at free boundary points]\label{fbreg} 
Fixed $0<\mu < \infty$, there exist parameters $0<m_\mu \leq 1+1/\mu$, $0<\rho_0 < 1/4$ and $C>0$ depending only on $d, \|u\|_{\infty,G_1}$ and $\mu$ such that, for 
$$
1 < m \leq m_\mu,
$$
nonnegative locally bounded weak solutions $u$ of \eqref{m-eq}  in $G_{1}$ satisfy 
$$
\sup\limits_{I^\theta_\rho(t_0)\times B_\rho^{}(x_0)} u(t,x) \leq C \rho^{\,\mu}
$$ 
for each $(t_0,x_0) \in \mathcal{F}(u) \cap G_{\frac{1}{2}}^{\theta}$ and $0<\rho\leq \rho_0$. 
\end{theorem}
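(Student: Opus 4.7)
The plan is to implement the Caffarelli-type compactness and improvement-of-flatness scheme advertised in the introduction, adapted to the intrinsic $\theta$-cylinders. A standard parabolic rescaling of \eqref{m-eq} together with a translation of $(t_0,x_0)$ to the origin reduces the theorem to the normalized case: $0\le u\le 1$ is a weak solution in $G_1$ with $1<m\le m_\mu$ and $(0,0)\in\mathcal{F}(u)$, and the task becomes to establish $\sup_{G_\rho^\theta} u \le C\rho^{\mu}$ for $0<\rho\le\rho_0$, with constants depending only on $d$, $\mu$ and $\|u\|_{\infty,G_1}$.

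The core of the argument is a \emph{one-step flatness improvement} (which I would formalize as Proposition \ref{stepone}): there exist $\lambda\in(0,1/4)$ and $m_\mu\in(1,1+1/\mu]$, depending only on $d$ and $\mu$, such that
\[
\sup_{G_\lambda^{\theta}} u \le \lambda^{\mu}.
\]
I would prove this by contradiction and compactness. Assuming failure produces sequences $u_k$, $m_k\searrow 1$, $\theta_k:=\mu(1-m_k)+2\nearrow 2$, with $\sup_{G_\lambda^{\theta_k}} u_k>\lambda^{\mu}$. The universal H\"older estimates from \cite{D,DF1} provide local equicontinuity, while the Caccioppoli estimate inherent in Definition \ref{def} yields a uniform $L^2_{\mathrm{loc}}(H^1_{\mathrm{loc}})$ bound for $u_k^{m_k}$. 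Along a subsequence $u_k\to u_\infty$ locally uniformly and $\nabla u_k^{m_k}\rightharpoonup \nabla u_\infty$ weakly. Since the flux in Definition \ref{def} is exactly $\nabla u_k^{m_k}$, passing to the limit identifies $u_\infty\ge 0$ as a weak solution of the heat equation on $G_1$ with $u_\infty(0,0)=0$. The strong parabolic minimum principle then forces $u_\infty\equiv 0$ on $G_1\cap\{t\le 0\}=G_1$, and therefore $\sup_{G_\lambda^{\theta_k}} u_k\to 0$, contradicting the hypothesis.

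The iteration is then routine. A direct calculation shows that the exponent $\theta=\mu(1-m)+2$ is precisely what makes $v(t,x):=\lambda^{-\mu}u(\lambda^{\theta}t,\lambda x)$ a solution of \eqref{m-eq} with the \emph{same} $m$; this is the content of the scaling identity \eqref{interp}. Applying Proposition \ref{stepone} inductively to $u_k(t,x):=\lambda^{-k\mu}u(\lambda^{k\theta}t,\lambda^{k}x)$ yields $\sup_{G_{\lambda^k}^{\theta}} u\le \lambda^{k\mu}$ for every $k\ge 1$, and for arbitrary $\rho\in(0,\lambda]$ the dyadic localization $\lambda^{k+1}<\rho\le\lambda^k$ closes the estimate with $C=\lambda^{-\mu}$ and $\rho_0=\lambda$.

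The principal obstacle is a mild circularity in the compactness step: the intrinsic cylinders $G_\lambda^{\theta_k}$ themselves depend on the $m_k$ one is in the process of choosing. I would resolve it by fixing the candidate $m_\mu$ (and hence a uniform lower bound $\theta_0:=\mu(1-m_\mu)+2>0$ on the exponents) \emph{before} running the contradiction, so that the monotonicity $G_\lambda^{\theta_k}\subset G_\lambda^{\theta_0}$ (valid since $\theta_k\ge \theta_0$) confines the whole compactness argument to a single fixed compact subset of $G_1$. This careful choice of dyadic parabolic cylinders — as emphasized in the introduction — is what allows one universal $m_\mu$ to serve every scale of the iteration.
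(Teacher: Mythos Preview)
Your proposal is correct and shares the paper's architecture: a compactness lemma passing to the heat equation, the strong maximum principle to force the limit to vanish, and then the intrinsic rescaling $v\mapsto\lambda^{-\mu}v(\lambda^{\theta}t,\lambda x)$ iterated dyadically. The one genuine variation is in how the $m$-dependence of the cylinders is neutralized during the compactness step. You work directly on the intrinsic cylinders $G_\lambda^{\theta_k}$ and invoke the monotonicity $G_\lambda^{\theta_k}\subset G_\lambda^{1}$ (available because $m_k\le 1+1/\mu$ forces $\theta_k\ge 1$) to trap everything in a fixed compact; note that your phrase ``fixing the candidate $m_\mu$ before the contradiction'' is slightly off, since $m_\mu$ is the \emph{output} of the argument --- what you actually use is the a priori ceiling $1+1/\mu$. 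The paper instead decouples the two steps completely: Lemma~\ref{complem} runs the compactness on the \emph{fixed classical} cylinder $I_{1/2}^{2}\times B_{1/2}$ (exponent $2$, no $m$ anywhere) to obtain $\sup v\le(1/2)^{2\mu}$, and Proposition~\ref{stepone} then converts this into an intrinsic-cylinder estimate via the $m$-dependent radius $\rho=(1/2)^{2/\theta}$, which satisfies $I_\rho^{\theta}=I_{1/2}^{2}$, $B_\rho\subset B_{1/2}$, and $(1/2)^{2\mu}\le\rho^{\mu}$. Your route yields a single $\lambda$ serving all admissible $m$; the paper's keeps the compactness lemma entirely free of intrinsic geometry, at the price of a one-step scale $\rho$ that varies with $m$ (though the final $\rho_0$ and $C=4^{\mu}$ do not).
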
 

For the second result, we establish the optimal rate growth for solutions of the inhomogeneous case \eqref{mf-eq} at free boundary points $(t,x) \in \mathcal{F}(u)$. Such optimality is observed by considering the stationary solution $u(t,x)=u(x)=C|x|^{2/m}$, for $C>0$ depending on $d$ and $m$.

\begin{theorem}[Optimal growth estimates at free boundary points]\label{fbregnon} 
There exist parameters $\tilde m>1$, $0<\overline\rho\,<1/4$ and $C>0$ depending only on $d$, $\|f\|_{\infty,G_1}$ and $\|u\|_{\infty,G_1}$ such that for each
$$
1 < m \leq \tilde m,
$$
nonnegative locally bounded weak solutions $u$ of \eqref{mf-eq}  in $G_{1}$ satisfy
\begin{equation}\label{fbestnon} 
\sup\limits_{I^{\theta}_\rho(t_0)\times B_\rho^{}(x_0)} u(t,x) \leq C \rho^{\,\frac{2}{m}}
\end{equation}
for each $(t_0,x_0) \in \mathcal{F}(u) \cap G_{\frac{1}{2}}^{\theta}$ and $0<\rho\leq \overline\rho$ with $\theta=\frac{2}{m}$.  
\end{theorem}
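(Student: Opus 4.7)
The plan is to follow the Caffarelli--Teixeira compactness/iteration blueprint used for Theorem \ref{fbreg} (and Proposition \ref{stepone}), tuned to deliver the specific exponent $\mu = 2/m$ dictated by the bounded forcing. The intrinsic scaling $\theta = 2/m$ is the unique choice for which the rescaling
\[
v(t,x) := \rho^{-2/m}\, u\!\left(\rho^{2/m} t,\, \rho x\right)
\]
preserves \eqref{mf-eq} with $\|\tilde f\|_\infty = \|f\|_\infty$. After translating any free-boundary point $(t_0,x_0)$ to the origin and performing a trivial renormalization $u \mapsto K^{-1}u(K^{1-m}t,x)$ with $K = K(\|u\|_\infty, \|f\|_\infty, m)$ bounded as $m \searrow 1$, I may assume $(0,0) \in \mathcal{F}(u)$, $\|u\|_\infty \leq 1$, and $\|f\|_\infty \leq \varepsilon_\ast$ for any prescribed $\varepsilon_\ast > 0$.

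The heart of the proof is the following flatness improvement, an inhomogeneous analogue of Proposition \ref{stepone}: there exist universal constants $\lambda \in (0, 1/4)$, $\varepsilon_\ast > 0$ and $m_\ast > 1$ such that, for any normalized solution $u$ as above with $1 < m \leq m_\ast$,
\[
\sup_{G_\lambda^{2/m}} u \;\leq\; \lambda^{2/m}.
\]
I would argue by compactness. If the assertion failed, one would obtain sequences $(u_k, f_k, m_k)$ with $m_k \searrow 1$, $\|f_k\|_\infty \to 0$, $\|u_k\|_\infty \leq 1$, $(0,0) \in \mathcal{F}(u_k)$, and points $(t_k^\ast, x_k^\ast) \in G_\lambda^{2/m_k}$ satisfying $u_k(t_k^\ast, x_k^\ast) > \lambda^{2/m_k}$. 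The Di Benedetto--Friedman H\"older estimates \cite{D,DF1}, with constants uniform for $m$ near $1$, provide equicontinuity of $\{u_k\}$ on compact subsets of $G_1$, so along a subsequence $u_k \to u_\infty$ uniformly on $G_{3/4}$. Since $\{u_k\}$ is uniformly bounded, $u_k^{m_k} \to u_\infty$ uniformly, and with $\|f_k\|_\infty \to 0$ the weak formulation passes to the limit: $u_\infty$ is a nonnegative caloric function on $G_{3/4}$ with $u_\infty(0,0) = 0$. Because $(0,0)$ lies in the parabolic interior (at the top face) of $G_{3/4}$, the strong maximum principle forces $u_\infty \equiv 0$. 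Extracting $(t_k^\ast, x_k^\ast) \to (t^\ast, x^\ast)$ and using uniform convergence yields $u_\infty(t^\ast, x^\ast) \geq \lim \lambda^{2/m_k} = \lambda^2 > 0$, a contradiction.

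With the flatness improvement in hand, iteration finishes the proof. Setting $v_k(t,x) := \lambda^{-2k/m}\, u(\lambda^{2k/m}t,\, \lambda^k x)$, each $v_k$ solves an equation of the same type with $m$ unchanged, $\|\mathrm{source}\|_\infty \leq \|f\|_\infty \leq \varepsilon_\ast$, and $(0,0) \in \mathcal{F}(v_k)$. The inductive hypothesis $\sup_{G_{\lambda^k}^{2/m}} u \leq \lambda^{2k/m}$ gives $\|v_k\|_\infty \leq 1$, so the flatness improvement applies to $v_k$; pulling back,
\[
\sup_{G_{\lambda^{k+1}}^{2/m}} u \;\leq\; \lambda^{2(k+1)/m}.
\]
A standard dyadic interpolation on $\rho \in (\lambda^{k+1}, \lambda^k]$ then yields \eqref{fbestnon} for all $0 < \rho \leq \overline\rho := \lambda$, with $C = \lambda^{-2/m}$ absorbing the initial normalization.

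The main obstacle I anticipate is securing the compactness step uniformly as $m \searrow 1$: one must check that the H\"older constants and the energy estimates for \eqref{mf-eq} remain bounded in a neighborhood of $m = 1$, and that the quasilinear flux $m\,u_k^{m_k-1} \nabla u_k$ converges in distribution to $\nabla u_\infty$ along the sequence. A closely related, self-referential difficulty is that the cylinders $G_\rho^{2/m}$ themselves depend on the very parameter one is letting tend to $1$; this is precisely the self-dependence that the dyadic-cylinder device of Proposition \ref{stepone} was designed to resolve, and the same technique adapts here essentially verbatim.
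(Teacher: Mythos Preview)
Your argument is correct but takes a genuinely different route from the paper's. The paper does \emph{not} rerun a compactness argument with $m_k\searrow 1$; instead it builds Theorem~\ref{fbregnon} on top of the already established Theorem~\ref{fbreg} via a two-layer approximation. First, Lemma~\ref{complemma2} (compactness at \emph{fixed} $m$, letting only the source tend to zero) shows that an inhomogeneous solution with $v(0,0)+\|f\|_\infty$ small is uniformly close to some solution $\varpi$ of the \emph{homogeneous} $m$-equation with $\varpi(0,0)=0$. Then Remark~\ref{rem} applies Theorem~\ref{fbreg} with $\mu=2$ to $\varpi$, yielding $\sup_{I^{\theta'}_r\times B_r}\varpi\le Cr^{2}$; adding the $\kappa$-closeness gives the one-step decay (Proposition~\ref{smallstep1}), after which the iteration (Proposition~\ref{small}) is exactly as you wrote. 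Thus in the paper the smallness of $\tilde m-1$ is inherited wholesale from Theorem~\ref{fbreg} rather than rederived. Your version instead collapses both limits into one, sending $m_k\to 1$ and $\|f_k\|_\infty\to 0$ simultaneously so that the limiting profile is directly a nonnegative caloric function vanishing at the origin; this is more economical, makes Theorem~\ref{fbregnon} logically independent of Theorem~\ref{fbreg}, and sidesteps the intermediate $m$-dependent $\varepsilon(\kappa,m)$ of Lemma~\ref{complemma2}. One minor remark: because here $\theta=2/m\in(1,2)$ is uniformly bounded, the cylinders $G_\lambda^{2/m_k}$ all sit inside the fixed cylinder $G_\lambda^{1}$, so the self-dependence difficulty you flag at the end is in fact milder than in Theorem~\ref{fbreg}, and the dyadic-reparametrization device of Proposition~\ref{stepone} is not strictly needed in your scheme.
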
 

Next, we establish local sharp regularity estimates in space and time, for nonnegative bounded solutions of \eqref{mf-eq}. 

\begin{theorem}[Local sharp regularity]\label{locreg}
There exist parameters $\tilde m>1$, $0<\overline\rho\,<1/4$ and $C>0$ depending only on $d$, $\|f\|_{\infty,G_1}$ and $\|u\|_{\infty,G_1}$ such that for each
$$
1 < m \leq \tilde m, 
$$
nonnegative locally bounded weak solutions $u$ of \eqref{mf-eq}  in $G_{1}$ satisfy
\begin{equation}\label{locest}
\sup\limits_{(t,x) \in I^2_\rho \times B_\rho^{}} |u(t,x)-u(s,y)| \leq C \rho^{}
\end{equation}
for each $0<\rho \leq \overline \rho$ and $(s,y) \in I^2_{1/4} \times B_{1/4}$. In particular, nonnegative locally bounded weak solutions of \eqref{m-eq} are locally of class $C^{\,0,1}$ in space and $C^{\,0,\frac{1}{2}}$ in time.   
\end{theorem}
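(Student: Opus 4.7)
The argument combines the optimal free-boundary growth rate from Theorem \ref{fbregnon} with an interior parabolic regularity estimate, via a dichotomy that distinguishes behavior near the free boundary from behavior in the non-degenerate regime. The starting observation is that for $m$ universally close to $1$, one has $2/m \geq 1$, so the free-boundary rate $\rho^{2/m}$ is automatically dominated by $\rho$ for small $\rho$.

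Fix $(s,y) \in I^2_{1/4} \times B_{1/4}$ and $\rho \in (0, \overline\rho]$, and write $Q_\rho := I^2_\rho(s) \times B_\rho(y)$. For a universal constant $K$ to be chosen, I consider two cases. In the \emph{free-boundary regime}, if $\sup_{Q_{2\rho}} u \leq K\rho^{2/m}$, then $|u(t,x) - u(s,y)| \leq 2\sup_{Q_{2\rho}} u \leq 2K\rho^{2/m} \leq 2K\rho$ on $Q_\rho$, establishing \eqref{locest}. In the \emph{non-degenerate regime}, if $\sup_{Q_{2\rho}} u > K\rho^{2/m}$, then Theorem \ref{fbregnon} applied at any hypothetical free-boundary point close to $Q_\rho$ (in a cylinder of comparable radius containing $Q_{2\rho}$) would force $\sup_{Q_{2\rho}} u \leq C\rho^{2/m}$; choosing $K > C$ then forces $\mathrm{dist}_{\mathrm{par}}(Q_\rho, \mathcal{F}(u)) \geq c\rho$ for a universal $c > 0$.

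In the non-degenerate regime, $u > 0$ throughout a parabolic neighborhood of $Q_\rho$, and \eqref{mf-eq} is effectively a non-degenerate parabolic equation there. I would then run a dedicated compactness/improvement-of-flatness argument in the spirit of the proofs of Theorems \ref{fbreg}--\ref{fbregnon}: rescale $u$ on $Q_\rho$ to a $Q_1$-problem solved by a function $v$ of comparable $L^\infty$ norm and bounded below by a universal positive constant, and show that as $m \searrow 1$ this rescaled PME is well-approximated by the heat equation. Since heat-equation solutions enjoy interior $C^{0,1}$-in-space and $C^{0,1/2}$-in-time estimates with universal constants, a standard iteration at dyadic scales transfers this regularity to $v$, and scaling back to $u$ yields \eqref{locest}.

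\textbf{Main obstacle.} The delicate point is the compactness step in the non-degenerate regime. One must produce a uniform positive lower bound on the rescaled function $v$ (typically via a Harnack-type estimate for the PME whose constants remain stable as $m \searrow 1$) and then show that the resulting problem approaches the heat equation uniformly in $\rho$. In addition, the intrinsic PME cylinders $G_\rho^{2/m}$ natural to Theorem \ref{fbregnon} must be reconciled with the heat-scaled cylinders $G_\rho^2$ appearing in \eqref{locest}; since $2/m \to 2$ as $m \to 1$, the discrepancy is controllable, but the closeness threshold $\tilde m$ and the radius $\overline\rho$ must be chosen jointly so that this discrepancy, together with the Harnack and compactness constants, is absorbed into a single universal constant.
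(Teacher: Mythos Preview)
Your overall strategy---a dichotomy between a ``near the free boundary'' regime and a ``non-degenerate'' regime, with interior parabolic regularity in the latter---is exactly what the paper does. The implementations, however, differ in a way that matters.

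The paper bases its dichotomy on the value $u(s,y)$ rather than on $\sup_{Q_{2\rho}} u$. This is enabled by Proposition~\ref{small}, a refinement of Theorem~\ref{fbregnon} that yields the growth bound $\sup_{I^\theta_r \times B_r} u \leq C_0\, r^{2/m}$ whenever $u(0,0) \leq \tfrac{1}{2} r^{2/m}$ (not only when $u(0,0)=0$). Setting $r_\star=(2u(0,0))^{m/2}$ and rescaling by $r_\star$, the rescaled solution $\overline v$ satisfies $\overline v(0,0)=\tfrac{1}{2}$ exactly, and the lower bound $\overline v>\tfrac{1}{4}$ on a universal cylinder follows from the stable H\"older continuity of Theorem~\ref{compactness}. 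No Harnack inequality is needed.

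Your dichotomy on $\sup_{Q_{2\rho}} u$ makes the free-boundary case trivial, but it leaves precisely the obstacle you flag: knowing that the free boundary is at parabolic distance $\gtrsim \rho$ and that $\sup_{Q_{2\rho}} u > K\rho^{2/m}$ at \emph{some} point does not by itself give a uniform lower bound at $(s,y)$ or on all of $Q_\rho$, because parabolic Harnack for the PME is forward-in-time and intrinsically scaled. The paper's choice of threshold $r_\star$ sidesteps this entirely. Finally, your proposed compactness/improvement-of-flatness iteration in the non-degenerate regime is unnecessary: once $\overline v$ is bounded below by a universal positive constant, the equation $\overline v_t-\mathrm{div}(m\,\overline v^{\,m-1}\nabla \overline v)=\overline f$ is uniformly parabolic with ellipticity constants independent of $m\in(1,\tilde m]$, so classical linear parabolic estimates give the Lipschitz-in-space and $C^{0,1/2}$-in-time bounds directly, without any further approximation by the heat equation.
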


\medskip

%%%%%%%%%%%%%%%%%%%%%%%%%%%%%%%%%%%%
\subsection*{Auxiliary results}

Here we shall mention some important results to be required trough this paper. First, we remark the strong maximum principle for the heat equation, see for instance \cite[Theorem 11]{evans}.

\begin{theorem}\label{Evans} Assume $h(x,t)$ is a weak solution of the heat equation $h_t=\Delta h$ defined in $G_1$. If there exists a point $(t_0,x_0) \in G_1$ such that 
$$  
h(t_0,x_0)=\max\limits_{\overline{G_1}}h(t,x) 
$$
then $u$ is constant everywhere in $(-1,t_0\,] \times B_1$. 
\end{theorem}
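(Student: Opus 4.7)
The plan is to adapt Evans' classical argument based on the parabolic mean value property on heat balls. Since $h$ is a weak solution of $h_t = \Delta h$ in $G_1$, interior parabolic regularity makes $h$ smooth there, so at every $(\tau, \xi) \in G_1$ and every radius $r$ for which the heat ball
\begin{equation*}
E_r(\tau, \xi) := \bigl\{(s, y) : s \le \tau, \ \Phi(\xi - y, \tau - s) \ge r^{-d}\bigr\}
\end{equation*}
sits inside $G_1$, with $\Phi$ the standard heat kernel, the pointwise identity
\begin{equation*}
h(\tau, \xi) = \frac{1}{4 r^d} \iint_{E_r(\tau, \xi)} h(s, y)\, \frac{|\xi - y|^2}{(\tau - s)^2} \, dy\, ds
\end{equation*}
is available.

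Setting $M := \max_{\overline{G_1}} h = h(t_0, x_0)$, the first step is the following propagation lemma: if $h(\tau, \xi) = M$ and $E_r(\tau, \xi) \subset G_1$, then $h \equiv M$ throughout $E_r(\tau, \xi)$. This is immediate from the mean value identity, because the weight is nonnegative with unit total mass and $h \le M$ pointwise, so equality at the center forces $h \equiv M$ almost everywhere on $E_r(\tau, \xi)$, hence everywhere by continuity.

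To upgrade this to $h \equiv M$ on the whole past cylinder $(-1, t_0] \times B_1$, I would chain heat balls. Given an arbitrary target $(s^*, x^*)$ with $s^* \le t_0$ and $x^* \in B_1$, I first propagate backward in time from $(t_0, x_0)$ by a finite sequence of heat balls centered at successive points $(t_k, x_0)$, using that each $E_r(t_k, x_0)$ contains a backward segment $\{x_0\} \times [t_k - c_d r^2, t_k]$, until the time coordinate reaches $s^*$. I then propagate laterally at time $s^*$ through a finite chain of heat balls whose centers follow a polygonal path in $B_1$ joining $x_0$ to $x^*$, using that each heat ball contains a full spatial ball on every time slice slightly below its center, so consecutive balls overlap. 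The main technical obstacle is the geometric bookkeeping that keeps every heat ball in the chain inside $G_1$; this is routine once the chaining path is chosen in a compact subset of the open set $(-1, t_0] \times B_1$ and the radii are taken uniformly smaller than the distance of that path to $\partial G_1$, yielding $h(s^*, x^*) = M$ and concluding the proof.
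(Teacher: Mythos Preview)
The paper does not supply its own proof of this statement: it is quoted as an auxiliary result with the attribution ``see for instance \cite[Theorem 11]{evans}''. So there is no in-paper argument to compare against; your proposal is a self-contained write-up of the classical Evans proof via the parabolic mean-value formula on heat balls, which is exactly the source the paper points to.

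Your outline is essentially correct, but the second leg of your chaining has a geometric slip. The heat ball $E_r(\tau,\xi)$ meets the time slice $\{s=\tau\}$ only at its tip $(\tau,\xi)$; consequently, if you place a chain of centers all at the \emph{same} time level $s^*$, consecutive heat balls intersect only at isolated points and you cannot conclude that the next center lies inside the previous ball. The standard remedy (and what Evans actually does) is to connect $(t_0,x_0)$ to the target $(s^*,x^*)$, with $s^*<t_0$, by a straight segment in space--time; this segment is strictly time-decreasing, so each point on it lies strictly inside a small heat ball centered at the preceding point, and a finite chain covers the segment. The case $s^*=t_0$ then follows by continuity. With that correction your argument goes through and matches the cited reference.
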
  

Next, we are going to mention stable local regularity estimates for solutions of \eqref{m-eq} within their domain of definition. As obtained in \cite{DGV},  see theorem 11.2 and remark 1.1, constants $C$ and $\alpha$ in theorem \ref{compactness} below, are stable as $m\searrow 1$.

\begin{theorem}\label{compactness}
Let $1 \leq m \leq 2$ and $u$ be a bounded weak solution of \eqref{m-eq} in $G_1$. Then there exist constants $C>0$ and $0<\alpha<1$ depending only on $d$ and $\|u\|_{\infty, G_1}$ such that
$$
|u(t,x)-u(s,y)| \leq C(|x-y|^\alpha + |t-s|^{\frac{\alpha}{2}})
$$
for any pair of points $(t,x),(s,y) \in (-\frac{1}{2},0] \times B_{\frac{1}{2}}$. 
\end{theorem}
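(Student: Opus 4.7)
The plan is to establish the H\"older continuity by the classical intrinsic-scaling method of DiBenedetto. For a base point $(t_0,x_0)$ and a parameter $\omega>0$ comparable to the local oscillation of $u$, introduce the intrinsic cylinders
$$
Q_R(\omega):=(t_0-\omega^{1-m}R^2,\,t_0]\times B_R(x_0).
$$
In such a cylinder the factor $\omega^{1-m}$ compensates for the degenerate diffusion coefficient $mu^{m-1}$, so that (after the change of variables $\tau=\omega^{m-1}t$) the rescaled solution satisfies an equation whose effective ellipticity is controlled purely by $d$ and $\|u\|_{\infty,G_1}$.

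The key step would be to prove a \emph{single-step oscillation decay}
$$
\mathrm{osc}_{Q_{R/2}(\omega/2)}\,u\ \le\ \lambda\,\mathrm{osc}_{Q_R(\omega)}\,u
$$
with $\lambda=\lambda(d,\|u\|_{\infty,G_1})\in(0,1)$. I would derive this by the De Giorgi alternative: testing the weak formulation in Definition \ref{def} with truncations $(u-k)_{\pm}\zeta^{2}$ gives Caccioppoli-type energy inequalities in $Q_R(\omega)$. If the super-level set $\{u>\mathrm{sup}\,u-\omega/2\}$ occupies only a small fraction of $Q_R(\omega)$, a De Giorgi iteration bootstraps the smallness to $u\le\mathrm{sup}\,u-\omega/4$ on the smaller cylinder; in the complementary case, a logarithmic energy estimate combined with an expansion-of-positivity lemma yields the symmetric improvement from below.

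Iterating on the geometric sequence $R_n=2^{-n}R_0$, $\omega_n=\lambda^n\omega_0$ produces $\mathrm{osc}_{Q_{R_n}(\omega_n)}u\le C\omega_n$, and unwinding the intrinsic scaling back to standard parabolic cylinders $(t_0-r^2,t_0]\times B_r(x_0)$ gives the claimed H\"older estimate with exponent $\alpha=\alpha(\lambda)$ in space and $\alpha/2$ in time. The inclusion of the endpoint $m=1$ is handled by the same argument, where the intrinsic cylinders degenerate to the standard parabolic ones and the scheme reduces to the classical parabolic De Giorgi--Nash--Moser argument.

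The delicate point, and the reason the precise citation of \cite{DGV} matters, is the \emph{uniformity} of $C$ and $\alpha$ as $m\searrow 1$. Every intermediate constant, the Caccioppoli-Sobolev embedding, the De Giorgi iteration threshold, the expansion-of-positivity level, must be shown to remain bounded away from degeneration in the limit. Because the heat case $m=1$ satisfies the same estimates with truly universal constants, one can track the $m$-dependence through the argument and verify continuity at $m=1$, as done in \cite{DGV}; this stability is exactly what makes the subsequent compactness/improvement-of-flatness strategy driving Theorems \ref{fbreg}--\ref{locreg} viable.
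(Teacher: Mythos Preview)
The paper does not prove Theorem~\ref{compactness} at all: it is stated as an auxiliary result and attributed to \cite{DGV} (specifically Theorem~11.2 and Remark~1.1 there), with the explicit remark that the constants $C$ and $\alpha$ are stable as $m\searrow 1$. Your proposal is therefore not to be compared against any argument in the present paper, but rather against the literature it cites.

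That said, your sketch is an accurate high-level summary of the DiBenedetto intrinsic-scaling machinery, and you correctly identify the essential point for this paper's purposes: the \emph{stability} of the constants as $m\to 1$, which is precisely what \cite{DGV} establishes and what the compactness argument in Lemma~\ref{complem} relies on. As a proof proposal it is a reasonable outline, though of course each step (Caccioppoli estimates, the De Giorgi alternative, the logarithmic estimate, expansion of positivity, and the tracking of $m$-dependence through all of these) is itself substantial and would require the full apparatus of \cite{D,DGV} to carry out rigorously. For the purposes of this paper, simply citing \cite{DGV} as the author does is the appropriate choice.
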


\medskip

%%%%%%%%%%%%%%%%%%%%%%%%%%%%%%%%%%%%%%%%%
\subsection*{Normalization regime} 

For the results to be established in this paper it is enough, with no loss of generality, to consider normalized weak solutions $v$ of \eqref{m-eq} , i.e., satisfying $\|v\|_{\infty,G_1} \leq 1$. Indeed, in the case the results are established for normalized solutions, for any bounded weak solution $u(t,x)$ we may redefine it as follows
\begin{equation}\label{norm}
v(x,t) := \frac{u(N^{b}t,N^ax)}{N} \quad \mbox{in} \quad G_1,
\end{equation}
for $N=\max\{1,\|u\|_{\infty,G_1}\}$ with the following requirements
$$
b=2a-(m-1) \quad \mbox{and} \quad a<0.  
$$
Since $v$ still solves \eqref{m-eq} in $G_1$ satisfying $0\leq v \leq 1$, we conclude that Theorem \ref{fbreg} and Theorem \ref{locreg} can also be obtained for the non-normalized $u(t,x)$ with parameters under the additional dependence on $\|u\|_{\infty,G_1}$. 

By a similar analysis, we have that, for a given universal $\varepsilon_0>0$, we can always enter the smallness regime $\|f\|_{\infty,G_1} \leq \varepsilon_0
$ for a universal rescaled solution of \eqref{mf-eq} as in \eqref{norm}.

\medskip

%%%%%%%%%%%%%%%%%%%%%%%%%%%%%%%%%%%%%%
\section{Proof of Theorem \ref{fbreg}}\label{grow}
%%%%%%%%%%%%%%%%%%%%%%%%%%%%%%%%%%%%%%

The next result provides a universal flatness estimate that allows us to construct a refined decay of solutions in dyadic parabolic cylinders centered at a free boundary point. 

 \begin{lemma}\label{complem} Given $\kappa>0$ there exists $m_\kappa>1$, depending only on $\kappa$ and universal parameters, such that if $(0,0) \in \mathcal{F}(v)$, $0 \leq v \leq 1$ and $v$ satisfies \eqref{m-eq} in $G_1$ for
$$
1 \leq m \leq m_\kappa,
$$
then
$$
\sup \limits_{I_{1/2}^2 \times B_{1/2}^{}} v(t,x) \leq \kappa. 
$$  
\end{lemma}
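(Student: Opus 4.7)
The plan is to argue by contradiction via a compactness and rigidity scheme. Suppose the conclusion fails: then there exist $\kappa_0>0$, sequences $m_k\searrow 1$, and weak solutions $v_k$ of the $m_k$-pme in $G_1$ with $0\le v_k\le 1$ and $(0,0)\in\mathcal{F}(v_k)$, yet
$$
\sup_{I^2_{1/2}\times B_{1/2}} v_k \,>\, \kappa_0 \quad\text{for every }k.
$$
Invoking the stable H\"older estimate of Theorem \ref{compactness} (valid since $1\le m_k\le 2$ and $\|v_k\|_{\infty,G_1}\le 1$) furnishes constants $C$ and $\alpha\in(0,1)$ independent of $k$ controlling the $C^{\alpha,\alpha/2}$ seminorm of $v_k$ on compact subcylinders of $G_1$. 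A standard Arzel\`a--Ascoli argument then produces a subsequence (not relabeled) converging locally uniformly to a H\"older continuous limit $v_\infty:G_1\to[0,1]$.

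The next step is to verify that $v_\infty$ is a weak solution of the heat equation. Integrating by parts in space in the weak formulation of Definition \ref{def} (with $f\equiv 0$) transforms the flux term into $-\int_{t_1}^{t_2}\!\int_K v_k^{m_k}\Delta\varphi$ for test functions $\varphi$ with compact spatial support. Since $0\le v_k\le 1$, the elementary bound
$$
|v_k^{m_k}-v_k| \,\le\, (m_k-1)\,\sup_{s\in[0,1]}|s\log s|
$$
combined with the uniform convergence $v_k\to v_\infty$ yields $v_k^{m_k}\to v_\infty$ uniformly. Passing to the limit in the weak formulation therefore gives
$$
\int_K v_\infty\varphi\Big|_{t_1}^{t_2} + \int_{t_1}^{t_2}\!\!\int_K \bigl(-v_\infty\varphi_t - v_\infty\Delta\varphi\bigr)\,=\,0,
$$
which is precisely the weak form of the heat equation for $v_\infty$.

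Finally, continuity of $v_k$ together with the free boundary hypothesis imply $v_k(0,0)=0$ for every $k$, hence $v_\infty(0,0)=0$. Since $v_\infty\ge 0$ is a caloric function attaining its minimum at the interior point $(0,0)$, applying Theorem \ref{Evans} to $-v_\infty$ (whose maximum is then attained at $(0,0)$) forces $v_\infty\equiv 0$ on $(-1,0]\times B_1$. Uniform convergence on $\overline{I^2_{1/2}}\times\overline{B_{1/2}}$ then contradicts $\sup v_k>\kappa_0$ for sufficiently large $k$. The main delicacy of the argument lies in the stability of the underlying estimates as $m\searrow 1$: Theorem \ref{compactness} is invoked precisely because its constants do not degenerate in that limit, while the normalization $\|v_k\|_{\infty}\le 1$ is what guarantees that the nonlinearity $v_k^{m_k}$ converges strongly enough to linearize the equation in the limit.
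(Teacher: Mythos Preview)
Your proof is correct and follows essentially the same compactness-and-rigidity scheme as the paper's own argument: contradiction, stable H\"older equicontinuity from Theorem~\ref{compactness}, Arzel\`a--Ascoli, identification of the limit as a nonnegative caloric function vanishing at the origin, and the strong maximum principle to force the limit to be identically zero. You supply more justification than the paper for why the limit actually solves the heat equation (via the elementary bound $|v_k^{m_k}-v_k|\le (m_k-1)\sup_{[0,1]}|s\log s|$), but the strategy and all key ingredients coincide.
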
 

\begin{proof}
For the sake of contradiction, we assume the existence of $\kappa_0>0$ and sequences $(v_\iota)_{\iota \in \mathbb{N}}$ and $(m_\iota)_{\iota \in \mathbb{N}}$ where 
$$
v_\iota \in C_{loc}(-1,0;L^2_{loc}(B_1)), \quad (v_\iota)^{\frac{m_\iota+1}{2}} \in L^2(-1,0;W^{1,2}_{loc}(B_1))
$$ 
and $0 \leq v_\iota \leq 1$ such that $v_\iota$ is a nonnegative bounded weak solution of 
\begin{equation}\label{j-eq}\tag{$m_\iota$-pme}
(v_\iota)_t = \Delta(v_\iota^{m_\iota}) \quad \mbox{in} \quad G_1,
\end{equation}
with $m_\iota \to 1$ as $\iota\to \infty$; however 
\begin{equation}\label{comp1} 
\sup \limits_{I_{1/2}^2 \times B_{1/2}^{}} v_\iota(t,x) > \kappa_0.
\end{equation}
By stable H\"older continuity, Theorem \ref{compactness}, the family $(v_\iota)_{\iota \in \mathbb{N}}$ is equicontinous and bounded, therefore 
$$
v_\iota \to \tilde v \quad \mbox{uniformly in }(-1/2,0] \times B_{1/2}, 
$$
where the limiting function $\tilde v \geq 0$ solves
$$
\tilde v_t=\Delta \tilde v \quad \mbox{in } I_{1/2}^2 \times B_{1/2}^{}, 
$$ 
attaining minimum value at $(0,0)$. Therefore, by the strong maximum principle, theorem \ref{Evans}, we must have $\tilde v \equiv 0$ in $G_{1/2}^2$. This contradicts \eqref{comp1} and the proof of Lemma \ref{complem} is complete.
\end{proof}

\begin{proposition}[Improvement of flatness]\label{stepone} Given $0<\mu< \infty$, there exists a parameter $1 < m_\mu \leq 1+1/\mu$, depending only on $\mu$ and universal parameters, such that if $(0,0) \in \mathcal{F}(v)$, $0 \leq v \leq 1$ and $v$ is a bounded weak solution of \eqref{m-eq} in $G_1$ for
$$
1 < m \leq m_\mu,
$$
then
$$
\sup \limits_{I_{\rho}^\theta \times B_{\rho}^{}} v(t,x) \leq \rho^{\,\mu},  \quad \mbox{for} \quad \rho:=\left( \frac12 \right)^{\frac2\theta}.
$$ 
\end{proposition}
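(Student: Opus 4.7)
The strategy is to bootstrap Lemma \ref{complem} using the particular dyadic choice $\rho = (1/2)^{2/\theta}$, which, crucially, yields $\rho^\theta = 1/4$ \emph{independently of $m$}. This freezes the temporal side of the intrinsic cylinder as $I_\rho^\theta = (-1/4, 0] = I_{1/2}^2$, which is exactly the time interval appearing in the conclusion of Lemma \ref{complem}; the only remaining $m$-dependence is in the spatial radius $\rho$.

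First, I would observe that for $1 < m \leq 1 + 1/\mu$ one has $\mu(m-1) \leq 1$, hence $\theta = 2 - \mu(m-1) \in [1, 2)$, which forces $\rho = (1/2)^{2/\theta} \in [1/4, 1/2)$. This gives the uniform lower bound $\rho^\mu \geq 4^{-\mu}$, a positive quantity depending only on $\mu$, and the inclusion $I_\rho^\theta \times B_\rho \subset I_{1/2}^2 \times B_{1/2}$.

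Next, I would apply Lemma \ref{complem} with $\kappa := 4^{-\mu}$ to produce a universal threshold $m_\kappa > 1$ such that any weak solution $v$ of \eqref{m-eq} with $1 < m \leq m_\kappa$, $0 \leq v \leq 1$, and $(0,0) \in \mathcal{F}(v)$ satisfies $v \leq 4^{-\mu}$ throughout $I_{1/2}^2 \times B_{1/2}$. Setting $m_\mu := \min\{m_\kappa,\, 1 + 1/\mu\}$, both constraints hold for $1 < m \leq m_\mu$, and restricting to the smaller cylinder $I_\rho^\theta \times B_\rho$ one concludes $v \leq 4^{-\mu} \leq \rho^\mu$, which is precisely the desired flatness estimate.

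The principal obstacle --- the one flagged by the author in the introduction --- is the self-referential coupling between $\rho$ and $m$: one would naively want to invoke Lemma \ref{complem} with $\kappa = \rho^\mu$, but $\rho$ itself depends on $m$ through $\theta$, producing a circular dependence. The dyadic choice $\rho = (1/2)^{2/\theta}$ resolves this by decoupling the two scales: the time slab becomes $m$-independent, while the spatial radius admits a uniform lower bound once one restricts to $m \leq 1 + 1/\mu$, so a single fixed value of $\kappa$ works for \emph{all} admissible $m$ at once.
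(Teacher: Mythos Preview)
Your proposal is correct and follows essentially the same route as the paper: both arguments apply Lemma~\ref{complem} with the same choice $\kappa = 4^{-\mu} = (1/2)^{2\mu}$, use the identity $\rho^\theta = 1/4$ to match the time interval $I_\rho^\theta = I_{1/2}^2$, and exploit the lower bound $\rho \geq 1/4$ (from $\theta \in [1,2)$) to conclude $\kappa \leq \rho^\mu$. Your explicit definition $m_\mu := \min\{m_\kappa,\, 1+1/\mu\}$ is slightly cleaner than the paper's implicit handling, but the argument is otherwise identical.
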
   

\begin{proof}
First let us fix $0<\mu<\infty$.  Since we assumed $1 < m \leq 1+1/\mu$, it is easy to observe that for the parameter $\theta$ given in \eqref{interp}, there holds $1 \leq \theta < 2$. This implies that
\begin{equation}\label{improv1}
\left(\dfrac{1}{2}\right)^{2} \leq \left(\dfrac{1}{2}\right)^{\frac{2}{\theta}} < \;\, \dfrac{1}{2}.
\end{equation}
Now, considering in Lemma \ref{complem}
$$ 
\kappa=\left(\frac{1}{2}\right)^{2\mu},
$$ 
we guarantee the existence of a parameter $m_\mu$ depending only on $\mu$, such that $1<m_\mu \leq 1+1/\mu$, where for each $1 \leq m \leq m_\mu$ weak solutions of \eqref{m-eq} satisfy
$$
\sup \limits_{I_{1/2}^2 \times B_{1/2}^{}} v(t,x) \leq \left(\frac{1}{2}\right)^{2\mu}. 
$$
In view of this, by choosing $\rho:=\left(\dfrac{1}{2}\right)^{\frac{2}{\theta}}$ we have $I_{1/2}^2=I_{\rho}^\theta$. Then, from \eqref{improv1} 
$$
\sup \limits_{I_{\rho}^\theta \times B_{\rho}^{}} v(t,x) \leq \left(\frac{1}{2}\right)^{2\mu} \leq \left(\frac{1}{2}\right)^{\frac{2\mu}{\theta}}=\rho^{\,\mu},
$$
as desired.
\end{proof}

\begin{proof}[Proof of Theorem \ref{fbreg}]
Up to a translation, we can suppose with no loss of generality that $x_0=0$ and $t_0=0$. Also, from \eqref{norm} we can assume $0 \leq v \leq 1$. By now, we show a discrete version of Theorem \ref{fbreg}. In other words, for $\rho$ and $m_\mu$ as in Proposition \ref{stepone}, there holds 
\begin{equation}\label{kstep}
\sup \limits_{I_{\rho^k}^\theta \times B_{\rho^k}^{}} v(t,x) \leq \rho^{\,k\mu}, \quad k \in \mathbb{N}.
\end{equation}
We prove this inductively.  Note that case $k=1$ is precisely Proposition \ref{stepone}. Let us assume that \eqref{kstep} holds for some positive integer $k>1$. Set
$$
v_k(t,x):=\frac{v(\rho^{\,k\theta}t,\rho^{\,k} x)}{\rho^{\,k\mu}}, \quad (t,x) \in G_1. 
$$
It is easy to check that $v_k$ solves \eqref{m-eq} in $G_1$ with the same exponent $m$ satisfying $1 \leq m \leq m_\mu$ with $\theta$ as in \eqref{interp}. 
In addition, by the induction hypothesis, \eqref{kstep} holds and so we have
$$
\|v_k\|_{\infty, G_1} \leq 1. 
$$
Thus $v_k$ satisfies the hypothesis in Proposition \ref{stepone}. From that, we obtain 
$$
\sup \limits_{I_{\rho}^\theta \times B_{\rho}} v_k(t,x) \leq \rho^{\,\mu},
$$ 
which give us
$$
\sup \limits_{I_{\rho^{\,k+1}}^{\theta} \times B_{\rho^{\,k+1}}^{}} v(t,x) \leq \rho^{\,\mu(k+1)}.
$$

To complete the proof of Theorem \ref{fbreg}, we argue as follows. For a given $0< r \leq 1/4$, we select the positive integer $k_r$ such that 
$$
\rho^{k_r+1}< r \leq \rho^{k_r}.
$$
Finally, remembering that $\rho=\left(\frac{1}{2}\right)^{\frac{2}{\theta}}$, by \eqref{kstep} and \eqref{improv1}, we conclude
$$ 
\sup \limits_{I_{r}^\theta \times B_{r}^{}} v(t,x) \leq \sup \limits_{I_{\rho^{k_r}}^\theta \times B_{\rho^{k_r}}^{}} v(t,x) \leq \rho^{\,-\mu} \rho^{\,(k_r+1)\mu} < 2^{\frac{2\mu}{\theta}} r^{\,\mu} \leq 4^{\mu} r^{\,\mu}.
$$
\end{proof}

\medskip

%%%%%%%%%%%%%%%%%%%%%%%%%%%
\section{The inhomogeneous case}\label{inhomo}
%%%%%%%%%%%%%%%%%%%%%%%%%%%

In this section we shall deliver the proofs of Theorem \ref{locreg} and Theorem \ref{fbregnon} which provide regularity estimates for nonnegative bounded weak solutions of \eqref{mf-eq} for parameters $m$ universally close to $1$. Hereafter in this section, we assume the intrinsic cylinder exponent $\theta=\theta(\mu,m)$ for the particular case $\mu=2/m$, see \eqref{interp}. In this case such exponent is given by 
$$
\theta=2/m. 
$$  

Now we state the following approximation lemma. 

\begin{lemma}\label{complemma2} Let $v$ be a nonnegative bounded weak solution of \eqref{mf-eq} in $G_1$ with $0 \leq v \leq 1$. Given $\kappa>0$, there exists $\varepsilon>0$, depending on $\kappa$ and $m$ such that if  
\begin{equation}\label{lips1}
v(0,0) + \|f\|_{\infty,G_1}\leq \varepsilon,
\end{equation}
we can find $\varpi$, a bounded weak solution of $\eqref{m-eq}$ in $G^1_{1/2}$, satisfying $\varpi(0,0)=0$, such that
\begin{equation}\label{lips9}
\sup\limits_{I^\theta_{1/2} \times B_{1/2}}|v(t,x)-\varpi(t,x)| \leq \kappa.
\end{equation}
\end{lemma}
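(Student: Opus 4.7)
The plan is a standard compactness-by-contradiction argument built on the stable Hölder estimate of Theorem \ref{compactness}. Suppose the conclusion fails: there exist $\kappa_0>0$ and sequences $\{v_n\}$, $\{f_n\}$, with $v_n$ a nonnegative weak solution of $(v_n)_t-\Delta v_n^m = f_n$ in $G_1$, $0\le v_n\le 1$, and
\[
v_n(0,0)+\|f_n\|_{\infty,G_1}\leq \tfrac{1}{n},
\]
yet for every bounded weak solution $\varpi$ of \eqref{m-eq} in $G^1_{1/2}$ with $\varpi(0,0)=0$ one has $\sup_{I^\theta_{1/2}\times B_{1/2}}|v_n-\varpi|>\kappa_0$.

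Since $m$ is fixed, Theorem \ref{compactness} provides uniform Hölder estimates for each $v_n$ on $(-1/2,0]\times B_{1/2}$ depending only on $d$ and the sup bound. By Arzelà--Ascoli, along a subsequence $v_n\to v_\infty$ uniformly on this set, with $0\le v_\infty\le 1$ and, using $v_n(0,0)\to 0$, $v_\infty(0,0)=0$.

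The key identification step is to show that $v_\infty$ is a weak solution of \eqref{m-eq} in $G^1_{1/2}$. Standard energy estimates for the inhomogeneous pme (testing against $v_n\eta^2$ with a suitable cut-off) bound $v_n^{(m+1)/2}$ in $L^2_{loc}(0;W^{1,2}_{loc})$ uniformly in $n$, so along a further subsequence $\nabla v_n^{(m+1)/2}\rightharpoonup \nabla v_\infty^{(m+1)/2}$ weakly in $L^2_{loc}$ (the identification of the weak limit follows from uniform convergence of $v_n^{(m+1)/2}$ and continuity of the distributional derivative). Rewriting the flux as
\[
m v_n^{m-1}\nabla v_n \;=\; \tfrac{2m}{m+1}\, v_n^{(m-1)/2}\,\nabla v_n^{(m+1)/2},
\]
the first factor converges uniformly while the second converges weakly in $L^2_{loc}$; passing to the limit in the weak formulation of Definition \ref{def} (the remaining linear terms converge by uniform convergence and the source satisfies $\int f_n\varphi\to 0$) shows $v_\infty$ is a bounded weak solution of \eqref{m-eq}. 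Setting $\varpi:=v_\infty$ then contradicts the standing assumption for all large $n$.

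I expect the main obstacle to be precisely this passage to the limit in the degenerate diffusion term: the coefficient $v_n^{m-1}$ may vanish and only Hölder bounds are available a priori, so one cannot simply invoke strong compactness on $\nabla v_n$. The remedy is the symmetric factorisation through $v_n^{(m+1)/2}$, which converts the nonlinear flux into a strongly convergent factor multiplying a weakly $L^2$-convergent gradient — a standard device in the pme literature that is tailor-made for Definition \ref{def}.
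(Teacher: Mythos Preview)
Your argument follows the same compactness-by-contradiction route as the paper, and in fact supplies more detail on the passage to the limit in the weak formulation than the paper does (the paper simply asserts that the uniform limit $\overline v$ solves \eqref{m-eq} in $G^1_{1/2}$, without discussing the flux term). One small caveat: Theorem~\ref{compactness} as stated in the paper applies only to the \emph{homogeneous} equation \eqref{m-eq}, whereas your $v_n$ solve the inhomogeneous problem; you should either invoke the equally classical H\"older estimate for \eqref{mf-eq} with bounded source---as the paper does by appealing to ``classical local H\"older estimates''---or note explicitly that the estimate extends to this setting with constants depending additionally on $\|f\|_{\infty}$, which here is uniformly bounded.
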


\begin{proof} As in the proof of Lemma \ref{complem} we assume, for the sake of contradiction, that there exists $\kappa_0>0$ and, for each positive integer $k$, functions $v_k$ and $f_k$, such that 
$0 \leq v_k \leq 1$, $\|f_k\|_\infty \leq 1/k$ and $v_k$ is a weak solution of $(f_k,m\mbox{-pme})$ in $G_1$. On the other hand, 
\begin{equation}\label{comp11} 
\sup \limits_{I_{1/2}^\theta \times B_{1/2}^{}} |v_k(t,x) - \omega(t,x)| > \kappa_0,
\end{equation}
for any bounded weak solution $\omega$ of \eqref{m-eq} in $G^1_{\frac 12}$. By classical local H\"older estimates, $\{v_k\}$ is equicontinuous and bounded in $G^1_{\frac 12} \Subset G_1$. Therefore $v_k$ converges uniformly to some $\overline v$ which satisfies \eqref{m-eq} in $G^1_{\frac 12}$. This gives us a contradiction to \eqref{comp11} by choosing $k \gg 1$.
\end{proof}

\begin{remark}\label{rem}
Thanks to Theorem \ref{fbreg}, we observe there exists universal parameter $\tilde m>1$ and universal positive constant $C$ such that for any $\omega$ nonnegative bounded weak solution of \eqref{m-eq} with $1 < m \leq \tilde m$, there holds
\begin{equation}\label{fb}
\sup\limits_{I^{\theta'}_\rho \times B_{\rho}} \omega(t,x) \leq C\rho^{2},
\end{equation}
for $\theta'=2(1-m)+2$.
\end{remark}

First, we state the following proposition. 

\begin{proposition}\label{smallstep1} For the universal parameter $\tilde m > 1$ given in \eqref{fb},there exist positive universal constants $C_0, \varepsilon_0$ and $r_0$, such that if $v$ is a nonnegative bounded weak solution of \eqref{mf-eq} in $G_1$ such that
$$
1 < m \leq \tilde m \quad \mbox{and} \quad \|f\|_{\infty,G_1} \leq \varepsilon_0
$$
where $v$ satisfies
$$
0 \leq v(t,x) \leq 1 \quad \mbox{and} \quad v(0,0) \leq \frac 12 \, r_0^{\,\frac{2}{m}}, 
$$
then
$$
\sup \limits_{I_{r_0}^\theta \times B_{r_0}^{}} v(t,x) \leq r_0^{\,\frac{2}{m}}. 
$$ 
\end{proposition}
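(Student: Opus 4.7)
The plan is a Caffarelli-style compactness/perturbation argument: approximate $v$ by a nonnegative homogeneous solution $\varpi$ of \eqref{m-eq} vanishing at the origin, and then transport the sharp free-boundary decay of $\varpi$ (from Theorem \ref{fbreg}) back to $v$. The delicate point is to calibrate the constants so that $r_0$ and $\varepsilon_0$ come out truly independent of $m\in(1,\tilde m]$.

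In detail, I would first fix an auxiliary exponent $\mu_0>2$ (for definiteness, $\mu_0=3$) and invoke Theorem \ref{fbreg} with that $\mu_0$; up to possibly shrinking $\tilde m$ so that $\tilde m\le m_{\mu_0}$, this supplies universal constants $C_0$ and $\rho_0$ valid throughout $m\in(1,\tilde m]$. Next, choose $r_0\in(0,\rho_0)$ small enough that $C_0\,r_0^{\mu_0-2}\le\tfrac12$, set $\kappa:=r_0^2/2$, let $\varepsilon$ be the threshold supplied by Lemma \ref{complemma2} for this $\kappa$, and pick $\varepsilon_0>0$ small enough that $\tfrac12 r_0^{2/m}+\varepsilon_0\le\varepsilon$ uniformly for $m\in(1,\tilde m]$. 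Under the proposition's hypotheses, Lemma \ref{complemma2} then produces a nonnegative bounded weak solution $\varpi$ of \eqref{m-eq} with $\varpi(0,0)=0$ and $\sup_{G^\theta_{1/2}}|v-\varpi|\le\kappa$. Since $(0,0)\in\mathcal{F}(\varpi)$ (the case $\varpi\equiv 0$ is immediate), Theorem \ref{fbreg} applied to $\varpi$, after a standard rescaling if needed, yields
\[
\sup_{I^{\theta_{\mu_0}}_{r_0}\times B_{r_0}}\varpi\;\le\;C_0\,r_0^{\mu_0},\qquad \theta_{\mu_0}:=\mu_0(1-m)+2.
\]
A short computation gives $\theta=2/m\ge\theta_{\mu_0}$ for $m\ge 1$ and $\mu_0\ge 2$, so $I^\theta_{r_0}\subset I^{\theta_{\mu_0}}_{r_0}$, and the estimate closes:
\[
\sup_{G^\theta_{r_0}}v\;\le\;\sup_{G^\theta_{r_0}}\varpi+\kappa\;\le\;C_0\,r_0^{\mu_0}+\tfrac12 r_0^2\;\le\;\tfrac12 r_0^{2/m}+\tfrac12 r_0^{2/m}\;=\;r_0^{2/m}.
\]
The key algebraic manipulation is $C_0 r_0^{\mu_0}=C_0 r_0^{\mu_0-2/m}\,r_0^{2/m}\le C_0 r_0^{\mu_0-2}\,r_0^{2/m}\le\tfrac12 r_0^{2/m}$, valid because $r_0<1$ and $\mu_0-2/m\ge\mu_0-2$ for $m\ge 1$; combined with $\kappa=r_0^2/2\le r_0^{2/m}/2$.

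The main obstacle is precisely the universality of $r_0$. Attempting to use Remark \ref{rem} (the $\mu=2$ estimate) directly would require $C r_0^{2-2/m}\le 1/2$; but $2-2/m\searrow 0$ as $m\searrow 1$, so $r_0^{2-2/m}\to 1$ and no universal $r_0$ can work. Raising the auxiliary exponent to $\mu_0>2$ (at the price of a possibly smaller, still universal, $\tilde m$) opens a uniform positive gap $\mu_0-2/m\ge\mu_0-2>0$ and resolves the issue. A secondary technicality is that the threshold $\varepsilon$ from Lemma \ref{complemma2} depends a priori on $m$; uniformity on $m\in(1,\tilde m]$ should follow from the stable H\"older estimates (Theorem \ref{compactness}) that underpin its compactness argument.
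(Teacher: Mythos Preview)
Your argument is exactly the compactness/perturbation scheme the paper runs: invoke Lemma~\ref{complemma2} to get a homogeneous approximant $\varpi$ with $\varpi(0,0)=0$, feed $\varpi$ into the free-boundary decay of Theorem~\ref{fbreg}, and add back the approximation error. The paper carries this out with $\mu=2$ (this is precisely what Remark~\ref{rem} packages), uses the inclusion $I^\theta_r\subset I^{\theta'}_r$ coming from $\theta=2/m\ge\theta'=2(1-m)+2$ to get $\sup_{G^\theta_r}v\le\varepsilon+Cr^2$, and then \emph{chooses} $r_0=(1/(2C))^{m/(2(m-1))}$ and $\varepsilon=\tfrac12 r_0^{2/m}$ so that $Cr_0^2=\tfrac12 r_0^{2/m}$.

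Your diagnosis of the obstacle is accurate and in fact applies to the paper's own choice: that $r_0$ depends on $m$, and since the constant $C$ coming out of Theorem~\ref{fbreg} (it is $4^\mu$ in the proof) is certainly larger than $1/2$, one has $r_0\to 0$ as $m\searrow1$, whence $C_0=r_0^{-2/m}=(2C)^{1/(m-1)}$ in Proposition~\ref{small} blows up. Your remedy of lifting the auxiliary exponent to some $\mu_0>2$ opens the uniform gap $\mu_0-2/m\ge\mu_0-2>0$ and makes $r_0$ genuinely independent of $m$; the cylinder inclusion $I^\theta_{r_0}\subset I^{\theta_{\mu_0}}_{r_0}$ you check (equivalent to $m\ge 2/\mu_0$) is correct, and the only cost is a possibly smaller but still universal $\tilde m$. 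So your proof follows the paper's route but is more careful about the $m$-uniformity the downstream Theorems~\ref{fbregnon} and~\ref{locreg} actually require.
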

 
\begin{proof}
Hereafter in this section, we consider $v$ a nonnegative bounded weak solution of \eqref{mf-eq} for a fixed $m \in (1,\tilde m\,]$, as in Remark \ref{rem}. First, we show the existence of universal small parameter $r_0$, such that if
\begin{equation}\label{lips4}
\begin{array}{lcc}
v(0,0) \leq \frac 12\, r_0^{\frac{2}{m}} &
\quad \mbox{then} \quad 
 & \sup\limits_{I^\theta_{r_0} \times B_{r_0}} v(t,x) \leq r_0^{\frac{2}{m}}. 
\end{array}
\end{equation} 

Indeed, let $\kappa$ be as in Lemma \ref{complemma2} and $\varepsilon=\varepsilon(\kappa)>0$ such that the pointwise estimate \eqref{lips1} holds. In view of estimate \eqref{fb}, estimate \eqref{lips1} and $\theta' < \theta$, we have
\begin{equation}\nonumber 
\begin{array}{ccl}
\sup\limits_{I^\theta_{r} \times B_{r}} v(t,x) & \leq & \varepsilon + \sup\limits_{I^\theta_{r} \times B_{r}} \varpi(t,x) \\[0.5cm]
  & \leq & \varepsilon + \sup\limits_{I^{\theta'}_{r} \times B_{r}} \varpi(t,x) \\[0.5cm]
    & \leq & \varepsilon + Cr^{2}, \\
 \end{array} 
\end{equation}
for radii $0<r \leq 1/2$. Therefore, by making the following universal choices   
$$
r_0= \left(\frac{1}{2C}\right)^{\frac{m}{2(m-1)}}\quad \mbox{and} \quad \varepsilon=\frac{1}{2}r_0^{\,\frac{2}{m}}, 
$$
we obtain \eqref{lips4} as desired. 
\end{proof}

\begin{proposition}\label{small} For the universal parameter $\tilde m > 1$ given  in \eqref{fb},there exist positive universal constants $C_0, \varepsilon_0$ and $r_0$, such that if $v$ is a nonnegative bounded weak solution of \eqref{mf-eq} in $G_1$ such that
$$
1 < m \leq \tilde m \quad \mbox{and} \quad \|f\|_{\infty,G_1} \leq \varepsilon_0
$$
where $v$ satisfies
\begin{equation}\label{pointwise}
0 \leq v(t,x) \leq 1 \quad \mbox{and} \quad v(0,0) \leq \frac 12 \, r^{\frac{2}{m}}
\end{equation}
for each $0< r \leq r_0$, then
$$
\sup \limits_{I_{r}^\theta \times B_{r}^{}} v(t,x) \leq C_0\, r^{\frac{2}{m}}.
$$ 
\end{proposition}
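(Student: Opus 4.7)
The plan is to deduce Proposition \ref{small} from Proposition \ref{smallstep1} by a rescaling-and-iteration scheme that exploits the intrinsic exponent $\theta=2/m$. For each integer $k\geq 0$, I would introduce
$$v_k(t,x):=r_0^{-2k/m}\,v\bigl(r_0^{k\theta}t,\,r_0^k x\bigr),\qquad (t,x)\in G_1,$$
and check directly that $v_k$ is again a nonnegative weak solution of \eqref{mf-eq} on $G_1$ with the \emph{same} exponent $m$ and with source term $f_k(t,x)=f(r_0^{k\theta}t,r_0^k x)$, whence $\|f_k\|_{\infty,G_1}\leq\varepsilon_0$. The point is that with the intrinsic choice $\theta=2/m$ the porous-medium scaling relation $r_0^{k\theta}=(r_0^{-2k/m})^{m-1}(r_0^k)^2$ is satisfied identically, so no spurious factor involving $m-1$ ever appears under the rescaling.

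Next, I would establish by induction on $k$ that
$$\sup_{I_{r_0^k}^\theta\times B_{r_0^k}}v(t,x)\leq r_0^{2k/m},\qquad k\in\mathbb{N}.$$
The base case $k=0$ is immediate from $0\leq v\leq 1$. Assuming the claim for $k$, one obtains $0\leq v_k\leq 1$ on $G_1$, while the hypothesis \eqref{pointwise} applied at the particular radius $r=r_0^{k+1}\in(0,r_0]$ yields
$$v_k(0,0)=r_0^{-2k/m}v(0,0)\leq \tfrac12\,r_0^{2/m}.$$
Thus $v_k$ satisfies every hypothesis of Proposition \ref{smallstep1}, which then delivers $\sup_{G_{r_0}^\theta}v_k\leq r_0^{2/m}$; rescaling back is precisely the dyadic claim at level $k+1$.

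For the continuous estimate at an arbitrary scale $0<r\leq r_0$, I would select the unique integer $k$ with $r_0^{k+1}<r\leq r_0^k$ and use the cylinder monotonicity $I_r^\theta\times B_r\subset I_{r_0^k}^\theta\times B_{r_0^k}$ to conclude
$$\sup_{I_r^\theta\times B_r}v\;\leq\;r_0^{2k/m}\;=\;r_0^{-2/m}\,r_0^{2(k+1)/m}\;<\;r_0^{-2/m}\,r^{2/m},$$
so the proposition holds with the universal constant $C_0:=r_0^{-2/m}$.

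The only delicate issue is the algebraic bookkeeping in the rescaling step, which is where the specific interpolation exponent from \eqref{interp} (with $\mu=2/m$) is doing all the work. The fact that \eqref{pointwise} is required for \emph{every} $0<r\leq r_0$, rather than at a single scale, is also essential: it encodes both that $(0,0)$ is effectively a free-boundary point (let $r\to 0$) and the quantitative bound $v_k(0,0)\leq\tfrac12 r_0^{2/m}$ uniform in $k$ needed to iterate without loss.
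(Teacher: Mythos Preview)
Your proposal is correct and follows essentially the same route as the paper: both iterate Proposition~\ref{smallstep1} via the intrinsic rescaling $v\mapsto r_0^{-2k/m}v(r_0^{k\theta}t,r_0^kx)$ to obtain the dyadic bound $\sup_{G^\theta_{r_0^k}}v\leq r_0^{2k/m}$, and then interpolate to a general radius $0<r\leq r_0$ with $C_0=r_0^{-2/m}$. The only organizational difference is that the paper phrases the induction on the conditional statements $\mathcal{P}_k$ (``if $v(0,0)\leq\tfrac12 r_0^{2k/m}$ then $\sup_{G^\theta_{r_0^k}}v\leq r_0^{2k/m}$'') and rescales by a single factor $r_0$ at each step, whereas you induct directly on the dyadic supremum bound and rescale by $r_0^k$; your version makes the verification $0\leq v_k\leq 1$ (needed to re-apply Proposition~\ref{smallstep1}) more explicit.
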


\begin{proof} First, let us assume parameters $r_0$ and $\varepsilon_0$ as in Proposition \ref{smallstep1}. We want to show that if the pointwise estimate 
\begin{equation}\label{lips12}
v(0,0) \leq \frac 12 r_0^{\,k \frac{2}{m}}
\end{equation}
holds for a positive integer $k$, then
\begin{equation}\label{lips11}
\sup\limits_{I^\theta_{r_0^k} \times B_{r_0^k}} v(t,x) \leq r_0^{k\frac{2}{m}}. 
\end{equation} 
Indeed, we prove this by induction on $k$. Let us denote by $\mathcal{P}_k$ the following  
statement: if estimate \eqref{lips12} is pointwisely satisfied, then estimate \eqref{lips11} holds. Note that Proposition \ref{smallstep1} corresponds to the case $\mathcal{P}_1$. Now, we assume the statement $\mathcal{P}_k$ holds for $k>1$. Under the assumption \eqref{lips12} for $k+1$ instead of $k$ we consider
$$ 
\tilde{v}(x,t)=\frac{v(r_0^{\theta}t,r_0x)}{r_0^{\frac{2}{m}}}.
$$ 
Easily, we observe that $\tilde{v}$ is still a nonnegative bounded weak solution of $(\tilde f,m\mbox{-pme})$  for the parameter $1<m \leq \tilde m$ previously fixed and $\|\tilde f\|_\infty \leq \varepsilon_0$, such that
$$
\tilde{v}(0,0) \leq \frac 12 \, r_0^{\,k\frac{2}{m}}.
$$
Therefore, from statement $\mathcal{P}_{k}$, we have 
\begin{equation}\nonumber
\sup\limits_{I^\theta_{r_0^{\,k+1}} \times B_{r_0^{\,k+1}}} {v}(t,x) = \sup\limits_{I^\theta_{r_0^{\,k}} \times B_{r_0^{\,k}}} \tilde{v}(t,x)  \, r_0^\frac{2}{m} \leq r_0^{\,(k+1)\frac{2}{m}} 
\end{equation}
and so, $\mathcal{P}_{k+1}$ is obtained. Therefore $\mathcal{P}_{k}$ holds for every positive integer $k$. 

Finally we are ready to conclude the proof of Proposition \ref{small}. Fixed radius $0<r \leq r_0$, let us choose the integer $ k_r>0$ such that $r_0^{k_r+1} < r \leq r_0^{k_r}$. If
$$
v(0,0) \leq \frac 12 \, r^{\frac{2}{m}} \; \left(\leq \frac 12 \, r_0^{k_r\frac{2}{m}}\right) 
$$ 
then from $\mathcal{P}_k$, we obtain 
$$
\sup\limits_{I^\theta_{r} \times B_{r}} {v}(t,x) \leq \sup\limits_{I^\theta_{r_0^{\,k_r}} \times B_{r_0^{\,k_r}}} {v}(t,x) \leq r_0^{-\frac{2}{m}} r^\frac{2}{m}.
$$
\end{proof}

\begin{proof}[Proof of Theorem \ref{fbregnon}]
As observed in \eqref{norm} we can assume, with no loss of generality, that  nonnegative bounded solutions of \eqref{mf-eq}, are under the following regime 
$$
0 \leq v(t,x) \leq 1 \quad \mbox{with} \quad \|f\|_\infty \leq \varepsilon_0,
$$
for $\varepsilon_0$ as in Proposition \ref{small}. Using a covering argument, it is enough to prove estimate \eqref{fbestnon} for the particular case $t_0=0$ and $x_0=0$. Since $(0,0) \in \mathcal{F}(u)$ satisfies directly the pointwise estimate in \eqref{pointwise}, we have that the proof of Theorem \ref{fbregnon} follows easily as a particular case of Proposition \ref{small}.
\end{proof} 

\medskip

\begin{proof}[Proof of Theorem \ref{locreg}] 
It is enough to show estimate \eqref{locest} for the particular case $y=0$ and $s=0$. For this, we have to show there exist positive constants $m_0$ and $C$ such that, for a given nonnegative weak solution $v$ of $\eqref{mf-eq}$, with $0<m\leq m_0$, there holds
\begin{equation}\label{finalest}
\sup\limits_{I_r^{2}\times B_r} |v(t,x)-v(0,0)| \leq C r,
\end{equation}
for any $0<r\ll 1$. We also note that after a normalization argument, as argued in \eqref{norm}, we can assume with no loss of generality, that the solution $v$ satisfies the smallness conditions required in Proposition \ref{small}. For a universal parameter $r_0$ as in Proposition \ref{small}, let us denote 
$$
r_\star:=\left(2v(0,0)\right)^{\frac{m}{2}}. 
$$
We split the proof into three cases.

\medskip

\noindent
\textit{Case 1. $r_\star \leq r \leq r_0$}.  
For this, we have  
$$
v(0,0) \leq \frac 12 \,r^{\frac{2}{m}}.
$$
By Proposition \ref{small}, and assuming that  $1<m\leq \tilde m$, we derive
\begin{equation}\nonumber
\sup\limits_{I_r^{2}\times B_r} |v(t,x)-v(0,0)| \leq \sup\limits_{I_r^{\theta}\times B_r} v(t,x) \leq C\, r^{\frac{2}{m}}  \leq C r
\end{equation}
and estimate \eqref{finalest} follows.

\medskip
 
\noindent
\textit{Case 2. $0<r<r_\star \leq r_0$}.  
In this case, we define the rescaled function
$$
\overline{v}(t,x):= \dfrac{v(r_\star^{\theta} t,r_\star x)}{r_\star^{\frac{2}{m}}} \quad \mbox{in} \quad G_1,
$$
for $\theta=2/m$. We note that $\overline{v}$ is a nonnegative bounded weak solution of
$$
\overline v_t - div(m \,\overline v^{\,m-1} \nabla \overline v) = \overline f \quad \mbox{in } G_1,
$$
with $\| \overline f \|_\infty =  \| f \|_\infty$, satisfying
$$
\overline{v}(0,0)=\frac 12 \quad \mbox{and} \quad \sup\limits_{G_1} \overline{v}(t,x) \leq C,
$$  
where the last estimate is obtained by using Proposition \ref{small} for the radius $r_\star$. This makes $\overline{v}$ a function universally continuous and so, it is possible to find a universal parameter $\tau_0>0$ such that
$$
\overline{v}(t,x) > \frac{1}{4} \quad \mbox{for} \quad (t,x) \in I^{\theta}_{\tau_0} \times B_{\tau_0}. 
$$  
In view of this, $\overline{v}$ solves a uniformly parabolic equation of the form 
\begin{equation}\label{unifparab}
\overline v_t - div(\mathcal{A}(t,x) \nabla \overline v) = \overline f \quad  \mbox{in } I^{\theta}_{\tau_0} \times B_{\tau_0},
\end{equation}
for  some continuous coefficient $\lambda \leq \mathcal{A}(t,x) \leq \Lambda$ with universal ellipticity constants $0<\lambda < \Lambda$. By classical regularity estimates, we have $\overline v$ satisfies 
$$
\sup\limits_{I_r^{2}\times B_r} |\overline v(t,x)- \overline v(0,0)| \leq C r,
$$
for any $r>0$ such that  $I^2_r \subset I^2_{\tau_0} \Subset I^\theta_{\tau_0} $. Therefore 
$$
\sup\limits_{I_{rr_\star}^{2}\times B_{rr_\star}} |v(t,x)- v(0,0)| \leq C r r_\star^{\frac{2}{m}} \leq C rr_\star
$$
and so we guarantee that $v$ satisfies estimate \eqref{finalest} for any $0<r \leq r_\star\tau_0$. In order to conclude the argument of this case, we have to prove estimate \eqref{finalest} also holds for radii $r_\star\tau_0 < r \leq r_\star$. Indeed, using Proposition \ref{small} for radius $r_\star$ once more, we get
$$
\sup\limits_{I_{r}^{2}\times B_{r}} |v(t,x)- v(0,0)| \leq \sup\limits_{I_{r_\star}^{2}\times B_{r_\star}} |v(t,x)- v(0,0)|  \leq C r_\star \leq \frac{C}{\tau_0} r.
$$

\medskip

\noindent
\textit{Case 3. $r_\star > r_0$}. We easily observe that
$$
v(0,0)> \frac 12 r_0^{\frac{2}{m}}.
$$
By continuity, $v(t,x)$ is universally bounded from below in a small universal cylinder centered at $(0,0)$. Therefore, $v$ solves a uniformly parabolic equation as in \eqref{unifparab}, which implies that,  for some universal $r_1>0$, $v$  satisfies estimate \eqref{finalest} for any $0<r\leq r_1$.     

\medskip

Finally, we conclude estimate \eqref{finalest} holds for radii $0<r\leq \min\{r_0,r_1\}$. 

\end{proof}

\bigskip

{\small \noindent{\bf Acknowledgments.} The author is partially supported by CNPq 427070/2016-3 and grant 2019/0014 Paraiba State Research Foundation (FAPESQ). The author would like to thank the hospitality of the Abdus Salam International Centre for Theoretical Physics (ICTP), where parts of this work were conducted.

\bigskip

\bibliographystyle{amsplain, amsalpha}

\end{document}